\newtheorem{theorem}{Theorem}[section]
\def\RR{{\Bbb R}} 
\def\CC{{\Bbb C}} 
\newenvironment{proof}[1][Proof]{\textbf{#1.} }
{\hfill\rule{0.5em}{0.5em}\medskip}
\newenvironment{proof*}[1][Proof]{\textbf{#1.} }{}
\def\epsilon{\varepsilon}
\begin{document}


\title{A little scholium on Hilbert-Rohn\\
via the total reality of  $M$-curves:\\
Riemann's flirt with Miss Ragsdale}


\author{Alexandre Gabard}
\maketitle

\newbox\quotation
\setbox\quotation\vtop{\hsize
7.8cm \noindent

\footnotesize {\it Poincar\'e hat zuerst die Frage nach dem
Gesamt\-verlauf der reellen L\"osungen von
Differentialgleich\-ungen mit topo\-lo\-gischen Mitteln
behandelt.}

\noindent Hellmuth Kneser, 1921, in {\it Kurvenscharen auf
geschlossenen Fl\"achen {\rm \cite{Kneser_1921}}}.




}

\hfill{\hbox{\copy\quotation}}

\medskip

\newbox\abstract
\setbox\abstract\vtop{\hsize 12.2cm \noindent


\noindent\textsc{Abstract.} This note presents an elementary proof
of Hilbert's 1891 Ansatz of nesting for $M$-sextics, along the
line of Riemann's Nachlass 1857 and a simple Harnack-style
argument (1876). Our proof seems to have escaped the attention of
Hilbert (and all subsequent workers) [but alas turned out to
contain a severe gap, cf. Introduction for more!]. It uses a bit
Poincar\'e's index formula (1881/85). The method applies as well
to prohibit Rohn's scheme $\frac{10}{1}$, and therefore all
obstructions of Hilbert's 16th in degree $m=6$ can be explained
via the method of total reality. (The same ubiquity of the method
is conjectured in all degrees, and then suspected to offer new
insights.) More factually, a very simple and robust phenomenon of
total reality on $M$-curves of even order is described (the
odd-order case being already settled in Gabard 2013), and it is
speculated that this could be used as an attack upon the (still
open) Ragsdale conjecture for $M$-curves (positing that $\vert
\chi\vert\le k^2$). Of course a giant gap still remains to be
bridged in case the latter conjecture is true at all. Alas, the
writer has little experimental evidence for the truth of the
conjecture, and the game can be a hazardous one. However we
suspect that the method of total reality should at least be
capable of recovering the weaker Petrovskii bound, or strengthened
variants due to Arnold 1971. This text has therefore merely
didactic character and offers no revolutionary results, but tries
to reactivate a very ancient method (due basically to Riemann
1857) whose swing seems to have been somewhat underestimated, at
least outside of the conformal-mapping community.}

\centerline{\hbox{\copy\abstract}}

{\small \tableofcontents}

\section{Introduction}\label{sec1}

[20.04.13] The flirt suggested in our title is a fictional one,
which cannot have occurred between Bernhard Riemann (1826--1866)
and Virginia Ragsdale (1870--1945). However when the latter came
to visit Klein and Hilbert in G\"ottingen (ca. 1903?) the spirit
of Riemann was most vivid than ever and we shall try to speculate
about a direct connection between the works of both scientists.

In a previous paper (Gabard 2013 \cite{Gabard_2013_Ahlfors}), we
made an essay to connect a certain theory of {\it total reality\/}
rooted in Riemann's work on conformal representation with
Hilbert's 16th on real algebraic curves by extrapolating a bit
(hopefully not fallaciously)
the eclectic visions of V.\,A.~Rohlin 1978. We do not repeat here
the vast body of knowledge and array of conjectures accumulated in
both
disciplines and hope to have made there sufficiently explicit a
possible deep interpenetration of both topics. The aim of this
note is to illustrate the method of total reality on a more
concrete terrain, namely {\it Hilbert's nesting Ansatz\/} for
$M$-sextics which
is undeniably the first nontrivial result (1891)
paving the way toward the general formulation (in 1900) of
Hilbert's 16th (isotopic classification of real plane algebraic
curves, i.e. how ovals of such curves are distributed among
themselves, nested and mutually positioned).

{\it Bibliographical references.}---To keep the bibliography of
the present text within reasonable limits,  whenever a work is
cited by specifying only its author name and date (of publication)
we refer the interested reader to the extensive bibliography
compiled in Gabard 2013 \cite{Gabard_2013_Ahlfors}.

{\it Glossary of synonyms}.--- $\bullet$ Harnack(-maximal)
curve=$M$-curve, jargon of Petrovskii 1938, where $M$ stands
probably for maximal.

\medskip

$\bigstar\bigstar\bigstar$ {\it Very Important Warning (Mea Culpa)
(added in proof the [22.04.13])}.---After having posted this note
on the arXiv (yet the day prior to its diffusion), we noticed that
our proofs of the Hilbert and Rohn theorems
(\ref{Hilbert-via-total-reality:thm}) and
(\ref{Rohn-via-total-reality:thm})---via the method of total
reality due to Riemann---contains a serious gap. {\it Exercise:}
detect our mistake without reading the next hint in tiny
calligraphy.

{\tiny

\smallskip

{\it Hint:} it seems that we have overlooked the possible
presences of centers singularities (infinitesimally like
concentric circles) in the foliation (also contributing to
positive indices). Such centers may occur when curves of the
pencil of quartics contract an oval toward a solitary node.

}

\smallskip

If optimistic, this defect can perhaps be repaired, admittedly
after much more efforts. We decided to still publish this note for
two reasons. First, in the hope that someone is able to arrange a
proof of Hilbert (and Rohn) along the method of Riemann. Second,
our main result
(\ref{total-reality-M-curves-EVEN-punching-card:thm}) on the total
reality of $M$-curves is not affected by this issue and
complements the odd-degree case settled in  Gabard 2013
\cite[Thm~31.12, p.\,402]{Gabard_2013_Ahlfors}. Alas, this main
result has very basic character and should merely be regarded as a
first step toward deeper problems \`a la Hilbert, Rohn or Ragsdale
(that we are presently unable to tackle). Therefore, it is evident
that both our title (and abstract) are much immature (not to say
pathetic), but we left them unchanged deliberately in the hope to
attract more qualified workers to the question.
Of course anybody able to complete the programme from
Riemann-to-Hilbert can build upon our free-source file in case its
historical aspects seem of some didactic value.

\subsection{Hilbert's Ansatz: overview of all known proofs
(Hilbert 1891/1900/01, Wright 1907, Kahn 1909, L\"obenstein 1910,
Rohn 1911/13, Donald 1927, Hilton 1936, Petrovskii 1933/38,
Kervaire-Milnor 1961, Arnold 1971, \\ Rohlin 1974/78)}

$\bullet$ {\it Hilbert turning to a geometer.}---In 1891, in a
genius stroke without any antecedents, Hilbert
advanced (without proof, quite uncharacteristic of his style) the
conclusion that a sextic curve which is Harnack-maximal (i.e. with
the maximum number $11$ of ovals\footnote{As pointed by Elias
Boul\'e it seems that 11 is also the number of planets circulating
in the Solar system, when nano-objects like Ceres are included
into the count. Ceres is the greatest, and first detected (1801),
asteroid gravitating somewhere between Mars and Jupiter with a
diameter of about 1000 km.}) cannot have all its
ovals {\it
unnested\/} lying outside each other. Hilbert 1891
\cite{Hilbert_1891_U-die-rellen-Zuege} confessed in a footnote his
proof to be
exceptionally complicated and highbrow, more precisely:

\smallskip
{\footnotesize

``Diesen Fall $n=6$ habe ich einer weiteren eingehenden
Untersuchung unterworfen, wobei ich\,---\,freilich auf einem
au{\ss}erordentlich umst\"andlichen Wege\,--- fand, da{\ss} die
elf Z\"uge einer Kurve 6-ter Ordnung keinesfalls s\"amtlich
au{\ss}erhalb un voneinander getrennt verlaufen k\"onnen. Dieses
Resultat erscheint mir deshalb von Interesse, weil er zeigt,
da{\ss} f\"ur Kurven mit  der Maximalzahl von Z\"ugen der
topologisch einfachste Fall nicht immer m\"oglich ist.''

}

\smallskip

It required several generations of workers until the method of
Hilbert
reached full maturity. The detailed story is probably best
recorded in Gudkov's survey of 1974 \cite{Gudkov_1974/74}, but let
us sketch it briefly (while adding some ``inedited'' items to the
narration). Hilbert himself seems to have been quite fluctuant in
evaluating the level of rigor of his proof. As far as we know,
he  never published himself a proof, but supervised two
G\"ottingen Dissertations on the question (Kahn 1909
\cite{Kahn_1909} and L\"obenstein 1910 \cite{Löbenstein_1910}),
which apparently turned out to be inconclusive. At least this is
the opinion of both  Rohn 1913 \cite[p.\,178]{Rohn_1913} and
Gudkov 1974 (p.\,41), who actually asserts that on their own
admissions those writers (Hilbert's girls) confessed to have
failed proving  nonexistence of a $C_6$ of unnested type $11$.
Yet, it is slightly puzzling that Hilbert 1909
\cite{Hilbert_1909-Ueber-die-Gestalt-sextic} qualified the proof
of Kahn-L\"obenstein as complete, more precisely:

\smallskip

{\footnotesize

``[\dots] eine ebene Kurve 6-ter Ordnung hervorgehen, die aus elf
au{\ss}erhalb voneinander getrennt verlaufenden Z\"ugen
best\"ande. Da{\ss} aber eine solche Kurve nicht existiert, ist
einer der tiefstliegenden S\"atze aus der Topologie der ebenen
algebraischen Kurven; derselbe ist k\"urzlich von G.~Kahn und
K.~Loebenstein (Vgl. die G\"ottinger Dissertationen derselben
Verfasserinnen.) auf einem von mir angegebenen Wege bewiesen
worden.''

}
\smallskip

$\bullet$ {\it Rohn}.---Then came Rohn 1911/13 who elaborated
Hilbert's method in much more details. Yet according to
Academician D.\,A. Gudkov (still 1974) this was still not rigorous
enough and required some consideration of dynamical system \`a la
Andronov-Pontryagin ({\it syst\`emes grossiers\/}, alias
structural stability) to become logically robust. The method was
then christened the {\it Hilbert-Rohn method\/}.

What came next? As reported in Gudkov 1974 (p.\,42), a {\it
completely non-rigorous, descriptive attempt\/} of proof (of
Hilbert's Ansatz) was made in Donald 1927 (repeating apparently
the earlier inconclusive attempt of Wright 1907), all expressing
the same methodology as Hilbert. H. Hilton 1936  devoted a paper
to a criticism of Donald's article.

$\bullet$ The real breakthrough occurs with Petrovskii 1933/38 who
supplies a universal inequality valid in all degrees
pinching (from both sides) the Euler characteristic  $\chi $ of
the Ragsdale membrane bounding the ovals from inside. One side of
Petrovskii's inequalities reads $\chi\le \frac{3}{2}k(k-1)+1$,
where $k:=m/2$ is the semi-degree of the curve of even order
$m=2k$. This implies Hilbert's Ansatz, and of course much more.
His proof is an explosive cocktail: Euler-Jacobi-Kronecker
interpolation formula combined with Morse theory (1925). For
sextics ($m=6$, hence $k=3$), Petrovskii's upper-bound is $10$ and
so the curve with 11 unnested ovals is ruled out (its $\chi$ being
$11$). Hilbert's Ansatz is (re)proved, or even proved {\it for the
first time\/}, if we accept Gudkov's (sibylline) critiques to both
Hilbert and his students as well as toward Rohn. For this and
other achievements, Petrovskii is often regarded by Arnold as one
of the deepest
20th-century scholar of all Russia.

$\bullet$ Another proof (and perhaps the next one
historiographically) is due (or rather follows) from
Kervaire-Milnor 1961 \cite{Kervaire-Milnor_1961}. In there concise
PNAS-note, this eminent tandem proves what later (or former?) went
known as the {\it Thom conjecture\/} in the special case of
homology classes of degree 3. The Thom conjecture is the assertion
that a smooth oriented surface in the complex projective plane
$\CC P^2$ (the $4$-manifold  of all unordered pairs of points on
the 2-sphere) has
genus at least as big as that of an algebraic curve of the
same degree, namely $g=\frac{(m-1)(m-2)}{2}$. This conjecture of
Thom went
validated by Kronheimer-Mrowka in 1994, but its degree 3 case is
much older (1961 as we just said) and incidentally much based upon
work of the superhero V.\,A. Rohlin (ca. 1951). Now suppose given
a sextic with 11 unnested ovals. Since it is Harnack-maximal the
ovals disconnect the complexification (by Riemann's definition of
the genus). This is a remark of Klein 1876, which naively amounts
to visualize the Galois symmetry of complex conjugation as a
reflecting mirror about a plane leaving invariant a pretzel of
genus $g$ symmetrically sculpted in 3-space and cutting the plane
along $g+1$ ovals (cf. Fig.\,\ref{Pretzel:fig}a for the case
$g=3$). Dissecting one half of the curve gives a bordered surface
which pasted with the ovals-insides creates a surface of genus 0
(topological sphere) whose degree (in the homological sense) is of
course the halved degree of the sextic, namely 3. Rounding corners
(if necessary?) gives a smooth surface whose degree is 3 but of
genus 0 only, hence beating that of a smooth cubic of genus $1$.
Thom's conjecture (i.e. Kervaire-Milnor's theorem) is violated and
Hilbert's Ansatz
proved (via pure topology).

$\bullet$ {\it Arnold-Rohlin's era}.---The story does not finish
here, and other
spectacular simplifications of Hilbert's Ansatz came under the pen
of V.\,I. Arnold 1971, and his companion V.\,A. Rohlin 1974.
Arnold 1971 established the congruence $\chi\equiv k^2 \pmod 4$
(valid actually for all dividing curves, not only $M$-curves).
This prohibits the ``Hilbert sextic'' with 11 unnested ovals. In
1974 Rohlin found {\it Rohlin's formula\/} $2(\pi-\eta)=r-k^2$,
where $r$ is the number of ovals while $\pi,\eta$ are resp. the
number of positive and negative pairs of ovals (defined by
comparing orientations induced by the complexification of a
dividing curve with those coming from the bounding annulus for the
nested pair of ovals). This formula implies formally
Arnold's congruence (compare e.g., Gabard 2013 \cite[p.\,258,
Lemma~26.11]{Gabard_2013_Ahlfors}), and also implies Hilbert's
Ansatz. Indeed in the absence of nesting, the left-side of
Rohlin's formula vanishes and so $r=k^2=3^2=9$, which
is not equal to $11$. So Rohlin's formula is the dancing queen of
what can be done in the most elementary way. Its proof involves
capping off the 2 halves of the dividing curve by the bounding
discs of all ovals (hence overlapping violently in case of much
nesting), as to construct two singular $2$-cycles in $\CC P^2$
whose intersection is computed after pushing both objects in
general position.

At this stage nobody cared anymore to prove Hilbert's Ansatz as
the (Arnold-Rohlin) proof was nearly ``{\it from the Book\/}''. Is
the reader at this stage convinced of the truth of Hilbert's
Ansatz just on the basis of what is to be found in our note?
Presumably not as we did not presented any self-contained proof,
but this state of affairs will be remedied  in the sequel of this
text.

What came next? Probably several details but the level of
perfection of Arnold-Rohlin (with slight improvements by Wilson
1978) was so drastic that it left little room for any further
imagination.

$\bullet$ In Jan. 2013, we discovered another little explanation
of Hilbert's Ansatz. Suppose the sextic curve to have $11$
unnested ovals. It seems a reasonable folly to expect that empty
ovals of curves can always be contracted to points (solitary
nodes) via a continuous deformation of the coefficients (and this,
despite the rigidity reputation of algebraic objects). An oval
here is said to be {\it empty\/} if looking inside of it, one sees
no other smaller ovals.
Such principles of contractions were actually exploited by Klein
in 1892 (if not earlier) and also form the content of a conjecture
of Itenberg-Viro 1994, which posits that {\it any\/} empty oval of
an algebraic curve can be shrunk to a solitary node. This is a
truly remarkable conjecture which has neither been proved nor been
refuted up to present days.

Let us, cavalier, assume a stronger version of this conjecture
stipulating that {\it all\/} empty ovals can be shrunk {\it
simultaneously\/} (synchronized death of all empty ovals). Apply
this contraction to an unnested $M$-sextic with 11 unnested ovals
to see its underlying Riemann surface of  genus 10 strangulated
into 2 pieces of degree~3 intersecting in 11 points
(Fig.\,\ref{Pretzel:fig}a). But \'Etienne B\'ezout told us a long
time ago that 2 cubics intersect in $3\cdot 3=9$ points. Hilbert's
nesting Ansatz is proved modulo the (unproven) contraction
principle. Alas, the writer does not know if the collective
contraction principle just employed holds true in degree 6, but
this could be quite likely as the shrinking of {\it any\/} single
oval is a result of Itenberg 1994, based on the marvellous
technology of Nikulin 1979 (K3 surfaces, global Torelli for them,
etc.)

\begin{figure}[h]
\centering
\epsfig{figure=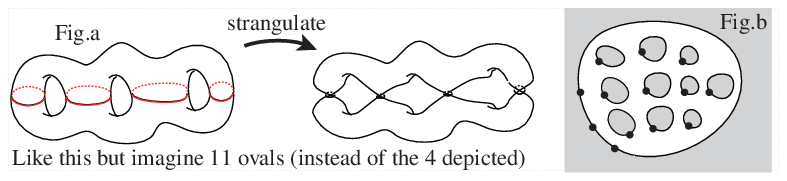,width=122mm} \vskip-5pt\penalty0
  \caption{\label{Pretzel:fig}%
  Contracting all the ovals of an unnested curve toward solitary
  nodes with imaginary-conjugate tangents} \vskip-5pt\penalty0
\end{figure}

It may be wondered if our strangulation proof (as heuristic as it
is) was known to Hilbert (or even Klein). It should be remembered
that principles of contraction were often used by Klein (say from
1876 up to 1892) and so it is quite likely that those G\"ottingen
scholars may have thought about this method at least as supplying
some heuristic evidence. (Alas, we know about no trace left in
print.)

$\bullet$ Yesterday evening [19.04.13],
we found another pleasant argument based on the method of total
reality. This method has historical origins in the theory of
conformal mappings, especially Riemann's Nachlass of 1857. Many
subsequent workers were involved in this theorem of Riemann, and
we merely cite them in cascade referring again to Gabard 2013
\cite{Gabard_2013_Ahlfors} for exact references: Schottky 1875/77,
Wirtinger ca. 1900 (unpublished), Enriques-Chisini 1915,
Bieberbach 1925, Grunsky 1937, Courant 1939, Wirtinger 1942
(published this time), Ahlfors 1947/50, A. Mori 1951, \dots,
Huisman 2001, and many others in between.

If this Riemann Nachlass is interpreted extrinsically along the
method used by Harnack 1876 (to prove the after-him called bound
$r\le g+1$ on the number $r$ of ovals), we get a very simple
derivation of Hilbert's Ansatz as we shall explain in the next
Sec.\,\ref{proof-of-Hilbert:sec}. This is the (modest) goal of
this note, but we strongly suspect that when applied more cleverly
Riemann's Nachlass could crack the Ragsdale conjecture or at least
affords simple proofs of the myriad of estimates due to Petrovskii
or his (greatest admirer) Arnold. So our game is an attempt to
shrink back  everything to Riemann via the method of total
reality.

As far as we are concerned, we have to acknowledge some pivotal
inspiration from the paper by Le Touz\'e 2013
\cite{Fiedler-Le-Touzé_2013-Totally-real-pencils-Cubics}, where
the total reality of quintics is explained in a synthetic fashion
(i.e. a Harnack-style argument with boni-intersections gained by
topology or algebra). A simple extension thereof to all curves of
{\it odd degrees\/} is given in Gabard 2013 \cite[Thm~31.12,
p.\,402]{Gabard_2013_Ahlfors}. In that work we failed to treat the
case of even order curves and this is remedied  below
(Sec.\,\ref{Total-reality-even-M-curves-punching-card:sec}) by
showing that total reality is likewise a very simple matter (using
the parity of intersection between ovals).

\section{Proofs}

\subsection{A 2 seconds proof of
Hilbert's Ansatz}\label{proof-of-Hilbert:sec}

\begin{theorem}\label{Hilbert-via-total-reality:thm}
{\rm (Hilbert 1891)}.---A real sextic curve cannot have $11$
unnested ovals.
\end{theorem}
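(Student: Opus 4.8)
\begin{proof*}[Proof strategy]
The plan is to turn a \emph{totally real pencil} into a Harnack-style count and extract the contradiction from Poincar\'e's index formula. Suppose, \emph{per absurdum}, that $C$ is a real sextic with $11$ unnested ovals. Then $C$ is Harnack-maximal, since $r=11=g+1$ with $g=\frac{(6-1)(6-2)}{2}=10$, so by Klein's remark it is a \emph{dividing} curve: the complexification splits as $C(\CC)\smallsetminus C(\RR)=C^{+}\sqcup C^{-}$ into two conjugate halves, and a one-line Euler-characteristic count shows each half to be planar (genus $0$) bounded by the eleven ovals. First I would feed $C$ into the total-reality phenomenon of Theorem~\ref{total-reality-M-curves-EVEN-punching-card:thm}: being an even-order $M$-curve, it admits a pencil $\Pi=\{F_t\}_{t\in\RR\PP^1}$ of cubics (the adjoints of the sextic) cutting it \emph{totally really}, i.e.\ each real member meets $C$ in real points only. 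The parity input that drives this step is that a generic real member crosses each individual unnested oval $O_i$ an \emph{even} number of times, the contractible oval being null-homologous in $\RR\PP^2$ whereas the odd branch of a real cubic is not.

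The next move is to read $\Pi$ as a singular \emph{foliation} $\mathcal{F}$ of $\RR\PP^2$, whose leaves are the real loci of the $F_t$ and whose singularities sit at the real base points of $\Pi$; by B\'ezout two cubics share $3\cdot 3=9$ points, so $\mathcal{F}$ has at most $9$ singularities of this kind. The core is then a \emph{local} index count, oval by oval. Let $O_i$ bound the disk $D_i\subset\RR\PP^2$. For a generic (regular) value $t$ total reality makes $F_t$ transverse to $O_i$, so $\mathcal{F}$ is transverse to $\partial D_i$, and Poincar\'e's formula applied to $\mathcal{F}\vert_{D_i}$ reads
\begin{equation*}
\sum_{p\in D_i}\operatorname{ind}_p(\mathcal{F})=\chi(D_i)=1 .
\end{equation*}
Hence every one of the eleven pairwise disjoint disks $D_1,\dots,D_{11}$ must enclose a foliation singularity of positive total index. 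Identifying those with base points of $\Pi$, the eleven disks would swallow at least $11$ distinct base points, contradicting the ceiling $9$. This is exactly the arithmetic $11\neq 9$ that already sinks the two cubic halves in the strangulation argument.

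The whole weight rests on the identification ``interior singularity $=$ base point'', and this is where I expect the proof to founder. The hard point is that $\mathcal{F}$ may carry singularities that are not base points at all: when a member $F_t$ contracts an oval to a solitary node (an isolated real double point with imaginary-conjugate tangents), the neighbouring leaves nest like concentric circles and create a \emph{center}, again of index $+1$. Such a center lodged inside some $D_i$ furnishes the mandatory $+1$ while consuming no base point, so an oval may perfectly well surround a center instead of a base point and the inequality $11\le 9$ collapses. Salvaging the argument therefore amounts to controlling---or counting away---these concentric-circle centers, equivalently to mastering how the degenerate members of $\Pi$ strangle the ovals; this is precisely the gap owned up to in the Introduction.
\end{proof*}
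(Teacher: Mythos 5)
Your overall strategy---an ancillary totally real pencil, read as a singular foliation and played against Poincar\'e's index formula---is exactly the paper's, and your closing diagnosis (center singularities of index $+1$ arising when a member contracts an oval to a solitary node) is precisely the gap the paper confesses in its Introduction; so you have reproduced both the idea and its known defect. But your execution contains a genuine error that the paper's version does not have: the pencil you invoke does not exist. Theorem~\ref{total-reality-M-curves-EVEN-punching-card:thm} for $m=6$ furnishes a pencil of curves of order $m-2=4$, i.e.\ \emph{quartics}, not cubics (the adjoints, of degree $m-3=3$, are a different object, and the Rohlin--Le~Touz\'e total reality for cubic pencils concerns $(M-2)$-sextics, not $M$-sextics). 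The distinction is fatal to your arithmetic: a pencil of cubics admits only $\binom{3+2}{2}-2=8$ assignable basepoints, fewer than the $11$ ovals, so the punched-card mechanism cannot even place a basepoint on every oval, and no source of total reality for a cubic pencil with respect to an $M$-sextic is available. Relatedly, your parity justification is vacuous as stated: that a member crosses each oval an \emph{even} number of times is compatible with it crossing zero times. The paper's mechanism is the opposite one: it places an \emph{odd} number of assigned basepoints on each oval ($13$ points: one on each of the $11$ ovals, plus two more on a single oval), so that evenness forces at least one \emph{extra} real intersection per oval; then $13+11=24=4\cdot 6$ saturates B\'ezout, which is total reality.

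Once the pencil is the correct one, your clean count ``$11$ disks, each needing an interior singularity, versus $9$ basepoints'' is no longer available: the quartic pencil has $16$ basepoints, of which $13$ lie \emph{on} the ovals (so your per-disk transversality hypothesis fails at points of each $\partial D_i$) and $3$ are unassigned and of unknown location. The paper handles this by doubling the Ragsdale membrane $R$ (the union of the $11$ disks) to $2R$, a union of $11$ spheres with $\chi(2R)=22$, and bounding the positive index contributions: the $13$ boundary basepoints give doubled semi-foyers contributing $13$, and the $3$ unassigned basepoints contribute at most $2$ each, whence $\chi(2R)\le 13+6=19<22$, the desired contradiction---modulo the same unjustified step you flagged, namely neglecting that centers also contribute $+1$. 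In short: same method and same honest gap, but your intermediate steps (cubic pencil, even-parity argument, the $11$-versus-$9$ count) would have to be replaced wholesale by the quartic-pencil bookkeeping before your write-up matches even the paper's admittedly incomplete proof.
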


\begin{proof} Inspired by the method of total reality, we consider
a certain ancillary pencil of quartics. Writing down monomials
along increasing degrees
$$
1,\underbrace{x,y}_2,\underbrace{x^2,xy,y^2}_3,\underbrace{x^3,x^2y,
xy^2,y^3}_4, etc.
$$
(best visualized as a pyramid \`a la Newton) we see that quartic
curves depend upon $1+2+3+4+5=\frac{6\cdot 5}{2}=15$ parameters
(the coefficients). Since we are only interested in the equation
up to homothety there are only 14 essential parameters, and so by
linear algebra there can be assigned 13 basepoints  to a pencil of
quartics.

Consider now the 11 ovals of the curve $C_6$ as pigeonholes where
to range the 13 basepoints. Distribute them injectively among the
$11$ ovals while placing the 2 remaining ones on the same oval
(compare Fig.\,\ref{Punching-Card:fig}b if necessary). By a
principle due to M\"obius-von Staudt (and massively used by
Zeuthen 1874 and Harnack 1876) we know that 2 ovals in $\RR P^2$
intersect always in an even number of points (counted by
multiplicity if necessary). Accordingly any curve $C_4$ of our
pencil of quartics
 has one boni-intersection on each oval since we are always
imposing an odd number of basepoints on each oval. So
$13+11=24=4\cdot 6$ real intersections are granted by intersection
theory \`a la M\"obius-von Staudt. This is the maximum permissible
by B\'ezout. We speak of a phenomenon of {\it total reality\/}.

Consider next the (mildly singular) foliation induced by this
pencil of quartics on the inside $R$ of the 11 ovals, which may be
seen as a special case of the {\it Ragsdale membrane\/} bounding
the ovals orientably ``from inside''. Since there is  no nesting
this membrane $R$ is merely a disjoint union of 11 (topological)
discs. It is convenient  to double this membrane to get $2R$, a
union of 11 spheres.

Now 13 basepoints are assigned on the boundary $\partial R=C_6$
but a pencil of quartics has 16 basepoints (B\'ezout once more).
So there is 3 unassigned basepoints, on which we know very little.
In the worst case those 3 points will land inside of the ovals.
Otherwise they can land on the ovals, or eventually outside of
them.

Apply Poincar\'e's index formula 1885 (announced 1881) telling us
that the sum of indices of a foliation\footnote{Poincar\'e 1885
worked this case too, as opposed to simply flows which are
orientable foliations by Ker\'ekj\'art\'o-Whitney 1925/1933.} is
equal to the Euler characteristic of the surface. Each foyer-type
singularity (infinitesimally like the pencil of lines through a
point) has an index of $+1$. In an algebraic foliation, basepoints
induce such foyers and neglecting crudely all singularities of
negative indices gives the estimate:
$$
\chi(2R)=\sum {indices} \le 13+3 \cdot 2=19,
$$
since all the 3 unassigned basepoints contribute for at most 2
foyers (one on each ``face'' of the double)
[Panoramix-double-fax], while the 13 comes of course by doubling
the semi-foyers visible at each of the 13 basepoints assigned on
the $C_6$. On the other hand, $\chi(2R)=2\chi(R)=2\cdot 11=22$.
This is arithmetical nonsense and Hilbert's theorem is proved.
\end{proof}

{\it Historiography}.---Our proof uses {\it Poincar\'e's index
formula\/} (1881/85), of course very well-known to Hilbert (cf.
e.g. the citation by Hellmuth Kneser, one of Hilbert's student, on
the front-page of this note). The (pre)history of Poincar\'e's
formula is probably best recorded in von Dyck 1888, where (vague)
forerunners are listed like Gauss 1839, or Kronecker 1869, and
many others.

\subsection{Rohn's prohibition of the scheme $\frac{10}{1}$
via total reality}

[21.04.13]. Applying the above argument to the dual {\it
non-orientable\/} membrane, say $N$, bounding the ovals from
outside proves Rohn's prohibition (1913 \cite{Rohn_1913}) of the
scheme $\frac{10}{1}$ where $10$ ovals are enveloped in a larger
oval.

\begin{theorem}\label{Rohn-via-total-reality:thm} {\rm (Rohn 1913)}.---An
$M$-sextic curve $C_6$ cannot have $10$ ovals enveloped in a
larger eleventh oval.
\end{theorem}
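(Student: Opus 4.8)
---The plan is to replay, almost word for word, the argument of Theorem~\ref{Hilbert-via-total-reality:thm}, the sole new decision being which Ragsdale-type membrane to feed to Poincar\'e's index formula. First I would recycle the total-reality mechanism unchanged: to the same ancillary pencil of quartics assign the $13$ basepoints, spread over the $11$ ovals so that each oval carries an odd number (say ten ovals one basepoint and the eleventh three, $10\cdot 1+3=13$). Because a quartic, having even degree, is a union of ovals, the M\"obius--von Staudt parity makes its intersection with each oval of $C_6$ even; the imposed odd count then forces one boni-intersection per oval, so $13+11=24=4\cdot 6$ real intersections saturate B\'ezout. This step never mentions nesting, hence transfers verbatim.

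The genuinely new point is the membrane. Under the scheme $\frac{10}{1}$ the inside of the ovals is no longer a disjoint union of discs, so the orientable membrane of the Hilbert proof is unavailable; instead I would take its dual, the \emph{non-orientable} region $N$ that the two-colouring of $\RR P^2\setminus C_6$ attaches to the M\"obius band lying outside the enveloping oval. Concretely $N$ is that M\"obius band together with the $10$ discs bounded by the inner ovals, whence
\[
\chi(N)=\chi(\text{M\"obius band})+10\cdot\chi(\text{disc})=0+10=10 .
\]
Doubling across $\partial N=C_6$ turns the M\"obius band into a Klein bottle and each inner disc into a sphere, giving $\chi(2N)=2\chi(N)=20$. (The complementary orientable region, an annulus with $10$ holes of Euler characteristic $-9$, doubles to $-18$ and is useless for a contradiction, which is why the non-orientable side is forced upon us.)

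Next I would apply Poincar\'e's formula on $2N$ exactly as in the Hilbert proof. The $13$ basepoints on $\partial N=C_6$ double into $13$ foyers of index $+1$, while of the $16$ basepoints of the pencil the $3$ unassigned ones contribute, even if all fall inside $N$, at most two foyers apiece. Discarding the singularities of negative index one would then write
\[
20=\chi(2N)=\sum indices\le 13+3\cdot 2=19 ,
\]
an absurdity that excludes Rohn's scheme.

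The hard part is the very defect already confessed for Theorem~\ref{Hilbert-via-total-reality:thm}, and here it bites just as sharply. The displayed estimate tacitly assumes that the only positive-index singularities of the foliation are the basepoint foyers. Yet should a member of the pencil pinch an oval to a solitary node in the interior of $N$, the neighbouring members close up into concentric ovals around it, producing a \emph{center} of index $+1$ whose double contributes a further $+2$. A single such center already raises the true sum of positive indices to $20$, the bound $\sum indices\le 19$ collapses, and with it the contradiction. Everything therefore rests on arranging the ancillary pencil so that no member contracts an oval of $C_6$ inside $N$; securing this absence of centers is precisely the step I do not see how to control, and it is the gap.
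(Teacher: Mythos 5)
Your proposal coincides with the paper's own proof in every essential: the same totally real pencil of quartics with thirteen basepoints placed ten-plus-three on the eleven ovals, the same anti-Ragsdale membrane $N$ (M\"obius band outside the enveloping oval plus the ten inner discs, so $\chi(N)=10$ and $\chi(2N)=20$), and the same Poincar\'e-index contradiction $20=\chi(2N)=\sum indices\le 13+3\cdot 2=19$. The defect you flag at the end---center-type singularities of index $+1$ created when a quartic of the pencil contracts an oval to a solitary node inside $N$, which ruins the bound $\sum indices\le 19$---is exactly the gap the paper itself confesses in its introductory \emph{Mea Culpa}, so your diagnosis of both the argument and its weakness agrees with the author's.
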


\begin{proof} As above we consider a total pencil of quartics with
13 basepoints distributed injectively on the 11 ovals safe that
one of the oval absorbs 3 basepoints. Consider the algebraic
foliation induced on the anti-Ragsdale membrane $N$ discussed
above (compare Fig.\,\ref{Pretzel:fig}b).

Applying Poincar\'e's index formula to the doubled membrane $2N$,
we find
$$
\chi(2N)=\sum {indices}\le 13+3\cdot 2=19.
$$
On the other hand $\chi(2N)=2 \chi(N)$, and  $N$ is the union of a
M\"obius band (with $\chi=0$) plus 10 replicas of the 2-sphere
$S^2$ with $\chi=2$, whence $\chi(N)=0+10=10$ and therefore
$\chi(2N)=20$. The proof is complete.
\end{proof}

It is puzzling that those arguments escaped Hilbert and Rohn.
Philosophically it seems that the cause is that those workers were
too much algebraically inclined as opposed to the pure geometry of
Riemann and Poincar\'e. So our argument represents a little
victory of (the angel of) geometry over (the devil of) algebra, as
would say H. Weyl.

More seriously, the 2 proofs given above are fundamental in
completing the programme sketched in the Introd. of Gabard 2013
\cite{Gabard_2013_Ahlfors}. There we explained how the Rohlin-Le
Touz\'e phenomenon of total reality   explains {\it nearly all\/}
prohibitions of Gudkov's census solving Hilbert's 16th in degree
$m=6$. The ``{\it nearly all\/}'' referred precisely to the fact
that this missed the 2 schemes $11$ and $\frac{10}{1}$ prohibited
by Hilbert and Rohn respectively. Since we are now also able to
treat those cases via total reality, we see that in degree $m=6$
the method of total reality is ubiquitous and universal. Of course
we conjecture this to be a general issue for all $m$, compare
again the Introd. of Gabard 2013 \cite{Gabard_2013_Ahlfors}.

\subsection{Total reality of $M$-curves of even order
(the punched card device of Harnack-Le Touz\'e-Gabard)}
\label{Total-reality-even-M-curves-punching-card:sec}

[17.04.13] Our former work (Gabard 2013
\cite{Gabard_2013_Ahlfors}) failed to assess total reality of
$M$-curves of even degree in the strong sense of knowing where to
assign basepoints. (For the weak sense reminiscing perhaps the
{\it Brill-Noetherschen Restsatz\/}, see \cite[Thm~31.8,
p.\,399]{Gabard_2013_Ahlfors}.) Now we show that a very simple
device (already used in Harnack 1876) grants (strong) total
reality of $M$-curves in the even degree case too. (For the odd
degree case see \cite[Thm~31.12, p.\,402]{Gabard_2013_Ahlfors}).

It would cause no trouble to write down directly the general
result and proof but this not the way one usually discovers truth,
so let us work more peacefully. (The pressed reader can directly
move to (\ref{total-reality-M-curves-EVEN-punching-card:thm}).)

Let us start with degree $m=6$ (sextics). Here we look (in
accordance with the general theorem \`a la Brill-Noether
(\cite[Thm~31.8, p.\,399]{Gabard_2013_Ahlfors}) or just
Riemann-Bieberbach) to curves of degree $m-2=4$, i.e. quartics.
Those may be assigned to visit $B=\binom{4+2}{2}-2=13$ basepoints
while still moving in a pencil. On the other hand our $M$-sextic
has $M=11$ ovals. How to distribute basepoints as to ensure total
reality of the quartics-pencil. A priori we may distribute the 13
basepoints on the $11$ ovals (surjectively and in very random
fashion), but then only $22<4\cdot 6=24$ real intersections are
granted. Let us be more specific. Suppose that we distribute
injectively $11$ basepoints on the $11$ ovals, while placing the 2
remaining points on 2 distinct ovals (cf. the black dots on
Fig.\,\ref{Punching-Card:fig}a). Then a $C_4$ of the pencil has
$2\cdot 2+ 9\cdot 2=22$ real intersections granted (compare again
Fig.\,\ref{Punching-Card:fig}a, where the white dots are extra
intersections gained for parity reasons of the intersection of two
ovals=even degree circuits in older jargon). This is not enough
for total reality to be valid at $24=4\cdot 6$. If however our
``punching-card machine'' assigns the 2 additional points on the
same oval (like on Fig.\,\ref{Punching-Card:fig}b), then we get
$1\cdot 4+10\cdot 2=24$ real intersections and total reality is
demonstrated (compare again Fig.\,\ref{Punching-Card:fig}b
counting now also the bonus intersections materialized by  white
bullets). Again, we used the classical fact that 2 ovals have an
even number of intersections counted by multiplicity.

\begin{figure}[h]
\centering
\epsfig{figure=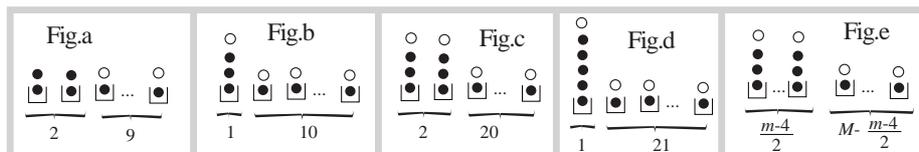,width=122mm} \vskip-5pt\penalty0
  \caption{\label{Punching-Card:fig}%
  Repartition of basepoints in black, and
  extra boni-intersections gained by
  the punching-card trick in white} \vskip-5pt\penalty0
\end{figure}

Optionally, it is a pleasant exercise to check that the same trick
and (little) miracle of boni works in degree 8.
Now since we expect a pencil of sextics we have
$B=\binom{8}{2}-2=26$ basepoints available, while we have
Harnack's bound many, i.e., $M_8=g+1=\frac{7\cdot 6}{2}+1=22$,
 ovals at disposal. Again to create as much boni intersections
as possible, it is valuable to disperse the 4 excess/surplus
basepoints as 2 groups of height 2 (Fig.\,c). Then by the evenness
principle for intersecting ovals we have $2\cdot 4+20 \cdot
2=48=6\cdot 8$ real intersections granted (the maximum permissible
by B\'ezout). Total reality is proved. It may be noted that
choosing a distribution like Fig.\,d, where the 4 extra {\it
bases/eyes}(=abridged for basepoints) are concentrated on a single
oval, total reality is likewise granted (as $1\cdot 6+21\cdot
2=48=6\cdot 8$). More basically, without arithmetics, we may infer
this by noticing that as the degree is odd in restriction to each
pigeonhole materializing an oval, we gain also one white-bullet
above all pigeonholes as in the former case, whence total reality.

As customary in such games, it is straightforward to extend to any
degrees and we arrive at the following result (in philosophical
substance known to Harnack 1876, or Enriques-Chisini 1915, or Le
Touz\'e 2013, and of course many others like Joe Harris, Johan
Huisman, etc.):

\begin{theorem}\label{total-reality-M-curves-EVEN-punching-card:thm}
Given any $M$-curve of even degree $m$, the pencil of curves of
order $m-2$ assigned to visit a
repartition of basepoints having odd ``degree'' on each oval is
totally real. Further, and in accordance with the
Riemann-Schottky-Enriques-Bieberbach theorem, the pencil possesses
exactly one mobile point circulating along each real circuit.
\end{theorem}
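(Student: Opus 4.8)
The plan is to generalize the counting argument already exhibited in degrees $m=6$ and $m=8$ to arbitrary even degree $m=2k$, the only content being a bookkeeping of basepoints against B\'ezout's bound, organized so that the parity principle for ovals supplies exactly the missing intersections. First I would fix the numerology. A pencil of curves of degree $m-2$ lives in the projective space of dimension $\binom{m}{2}-1$, so it can be forced to visit $B=\binom{m}{2}-2$ basepoints while retaining a genuine one-parameter family. Against this stands Harnack's bound: an $M$-curve of degree $m$ has exactly $M=g+1=\binom{m-1}{2}+1$ ovals. The total intersection number permitted by B\'ezout between the $C_m$ and a member $C_{m-2}$ of the pencil is $m(m-2)$, and total reality means realizing all of these as real points. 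One checks, purely arithmetically, that $m(m-2)-B = m(m-2)-\binom{m}{2}+2$ equals precisely $M$, the number of ovals; this is the clean identity that makes the whole device work.

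Next I would describe the distribution. We are told to assign the $B$ basepoints across the $M$ ovals so that each oval carries an \emph{odd} number of them (this is always arrangeable since $B-M$ is even, as the identity above forces; one scatters the surplus $B-M$ basepoints in pairs, or concentrates them on a single oval, as in Figures c and d). The key geometric input, already invoked for sextics, is the M\"obius--von Staudt parity: any oval meets a curve $C_{m-2}$ in an even number of points counted with multiplicity, since an oval is a two-sided (even) circuit. Hence if an odd number of basepoints is prescribed on a given oval, evenness forces at least \emph{one} further real intersection on that oval---a ``boni'' point, the white bullet of the figures. Summing, the $B$ prescribed basepoints together with the $M$ forced bonus intersections yield $B+M=m(m-2)$ real intersection points, which is exactly the B\'ezout maximum. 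Every intersection is thus real, and total reality of the pencil is established.

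For the second assertion---that exactly one mobile point circulates along each real circuit---I would invoke the Riemann--Schottky--Enriques--Bieberbach theorem cited in the statement, which is the conformal-mapping incarnation of total reality: a totally real pencil cuts each boundary circuit in a set of points that, as the pencil parameter varies, moves as a single point traversing the circuit once. Concretely, on a fixed oval the intersection with $C_{m-2}$ consists of the odd number of basepoints (fixed) plus the even number of mobile points; the total being what it is, precisely one intersection point is free to move while the pencil parameter runs over its $\PP^1$, tracing out the oval. This identifies the one mobile point per circuit and closes the statement.

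The main obstacle is not the counting---that is routine once the identity $m(m-2)-B=M$ is verified---but rather guaranteeing that a \emph{generic} member of the pencil actually attains all $m(m-2)$ intersections transversally and really, i.e.\ that no forced real intersections secretly coincide, escape to complex-conjugate pairs, or pile up at singular points of the pencil. The delicate point is exactly the one flagged in the paper's \emph{Mea Culpa}: degenerations of the pencil (contraction of an oval to a solitary node, producing center-type foyers) could in principle disturb the parity accounting. For the bare total-reality statement of Theorem~\ref{total-reality-M-curves-EVEN-punching-card:thm} this does not intervene---we need only the parity bound on a \emph{single} smooth member, which B\'ezout plus M\"obius--von Staudt delivers unconditionally---so the result stands; the subtlety only bites when one tries to push it toward the Hilbert--Rohn prohibitions via the index formula.
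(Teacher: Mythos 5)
Your proposal is correct and takes essentially the same route as the paper's own proof: the same numerology ($B=\binom{m}{2}-2$ basepoints, $M=\binom{m-1}{2}+1$ ovals, even excess $B-M=m-4$), the same M\"obius--von Staudt parity principle forcing one bonus real intersection per oval, the same saturation of B\'ezout's bound $B+M=m(m-2)$, and the same uniqueness-of-the-bonus argument for the single mobile point per circuit. The only differences are cosmetic---you verify the identity $m(m-2)-B=M$ up front rather than first computing with a specific distribution of the $m-4$ surplus basepoints and then noting the general odd-degree case, and your phrase ``the even number of mobile points'' is a slip (the mobile count per oval is odd, namely exactly one, so that the total on each oval is even), though the conclusion you draw from it is the correct one.
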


\begin{proof} The number of basepoints
for a pencil of $(m-2)$-tics is
$B=\binom{(m-2)+2}{2}-2=\binom{m}{2}-2$. Harnack's bound for the
given $m$-tics $C_m$ is $M=\binom{m-1}{2}+1$. Hence the excess of
basepoints over the number of ovals is
$B-M=[1+2+\dots+(m-1)]-2-[1+2+\dots+(m-2)]-1=m-4$.

We may for instance
share out
%
those $m-4$ extra
basepoints as on Fig.\ref{Punching-Card:fig}\,e, i.e. by splitting
them in $\frac{m-4}{2}$ groups of ``height'' 2. This is
arithmetically meaningful as $m$ is supposed even. Then, on
counting real intersections forced by intersection theory (of
ovals), we are granted of (cf. again Fig.\,e)
$4(\frac{m-4}{2})+2[M-\frac{m-4}{2}]=2(m-4)+2M-(m-4)=2M+(m-4)
=2[\frac{(m-1)(m-2)}{2}+1]+(m-4)=(m-1)(m-2)+(m-2)=m(m-2)$, and
total reality is demonstrated.

Of course as one bonus intersection is gained on each oval the
count works whenever the repartition has odd degree on each oval
and the asserted total reality is established.

The last clause of the statement (analogy with Riemann {\it et
al.\/}) follows by noticing that the boni intersections (white
bullets on Fig.\,\ref{Punching-Card:fig}e) are unique on each
oval.
\end{proof}

To what is this (theorem) useful at all? Always keep in mind that
we are geometers not (so much) obnubilated by the magics of
arithmetics. What could be desired is an intelligence  capable of
visualizing such pencils, and playing maybe with the
Poincar\'e-von Dyck index formula (1885/88 respectively). (Recall
that Ragsdale 1906 cites von Dyck 1888 precisely for this purpose,
yet not so surprising as  both are docile students of Klein.)
Granting this visualization (or just an arithmetical/combinatorial
corollary of it like Poincar\'e's index formula) one could maybe
infer a new proof of Hilbert's prohibition of the (unnested)
scheme $11$. [{\it Added in proof\/} [20.04.13]: This is indeed
possible see (\ref{Hilbert-via-total-reality:thm}).]

Even more ambitiously, we could dream that a thorough inspection
of such pencils (maybe combined with the Arnold/Rohlin tricks of
splitting and closing Klein's orthosymmetric half by suitable
limbs of real membranes) could imply a proof of the elusive {\it
Ragsdale conjecture\/} $\chi \le k^2$. The so-called {\it Arnold
surface\/} (i.e. Klein's half glued with Ragsdale's membrane) is
always embedded but alas not ever orientable (else Ragsdale would
be a trivial consequence of Thom, cf.
\cite[Sec.\,33]{Gabard_2013_Ahlfors}). By-standing to the former
there is also (what we propose to call) {\it Rohlin's surface\/}
which has the advantage of orientableness, but a ``singular
chain'' (in the jargon of Lefschetz-Alexander-Eilenberg) which is
not embedded in general. For the definition of this surface it
suffices us to say that it is the one involved in the proof of
Rohlin's formula.
One could try to
inspect the intersection of (say) an imaginary member of the
pencil with the semi-Riemann surfaces of Arnold or better Rohlin
(which is orientable hence defining an integral homology class).
Further, imaginariness amounts to unilaterality (in the sense of
Gabard 2006) under total reality: this is just to say that all
intersections have to be located in the same half (whence our
pompous name ``unilaterality'') apart from those coming from the
assigned real basepoints. We admit that this approach to Ragsdale
is probably overoptimistic, yet it seems wise to leave open any
possible strategy toward the elusive conjecture. For another
surely much more mature attack, cf. Fiedler's programme sketched
in his terrible letter dated [14.03.13] reproduced below.

\smallskip
$\bullet\bullet\bullet$ [14.04.13] dimanche 14 avril 2013
10:33:08, Fiedler wrote:

Dear Alexandre,

just to let you know some old results, which could perhaps be
worth to be explored in more generality.

In the mid 90' I have tried to prove the Ragsdale conjecture for
$M$-curves with an idea coming from knot theory: bring the object
first to its most symmetric position.

DEFINITION: A real curve $X$ is said to be symmetric if it is
invariant under a (non-trivial) holomorphic involution $s$ of the
complex projective plan.

The idea was to deform first the curve into a symetric one and
then to explore the additional information coming from pencils of
lines which are real simultaneously for both real structures on
$X$, namely $conj$ and $s.conj=conj.s$.

It has failed miserably, because I have proven the following
theorem.

THEOREM 1. For a symmetric $M$-curve of degree $2k$ the following
refinement of the Gudkov-Rokhlin congruence holds:
$$
p - n = k^2 mod 16.
$$
So, roughly half of the $M$-curves are not symmetric. It is
amazing that some conjectures are false in general but true for
symmetric $M$-curves. Hilbert's conjecture: because the Gudkov
curve is not symetric. Viro's conjecture: because an $M$-curve of
degree $8$ with a nest of depth $3$, which has an odd number of
innermost ovals, can not be symetric. On the other hand the idea
was not soo bad because I have proven the following theorem.

THEOREM 2. If a symmetric $M$-curve of degree $2k$ has a nest of
depth $k-2$ then $p - n <= k^2$ and if equality holds then the
Arnold surface $A^+$ is orientable.

These results were never published, because I was already to much
into knot theory. But you can find some information about it in a
paper of Erwan Brugalle and in a paper of my student Sebastien
Trilles. Very best, Thomas

\section{Speculations}

\subsection{Flirting with Miss Ragsdale}

[19.04.13] As we noticed in the previous section there is some
chance that total reality can crack the still open (and elusive)
Ragsdale conjecture for $M$-curves of even degrees, namely the
estimate $\chi \le k^2$. (NB: the full-Ragsdale conjecture posits
$\vert \chi\vert \le k^2$.)

Our idea is based on the previous total reality phenomenon
(\ref{total-reality-M-curves-EVEN-punching-card:thm}) for plane
$M$-curves of even order $m=2k$. This phenomenon is merely a
Harnack-style argument with boni-intersections gained by evenness
of the intersection of 2 ovals, yet it looks so robust and easy
that it seems a reasonable attack upon Ragsdale. To add some
spiciness to our strategy it should always be remembered that
total reality truly belongs to Ahlfors 1950 (or maybe Klein
according to Teichm\"uller 1941), yet on the case at hand of
$M$-curves it is really due to Riemann's Nachlass of 1857. This
being said we expect a big flirt between Riemann 1857 and Miss
Ragsdale 1906.

Now the idea would be that some intelligence able to visualize
properly this pencil (while extracting the relevant combinatorial
aspects) should be able to derive from the total reality of such
pencils the estimate $\chi\le k^2$ (and perhaps its opposite
$-k^2\le \chi$ too). As we said the trick could be to intersect an
imaginary (hence unilateral) member of the pencil with the {\it
Rohlin surface\/} obtained by aggregating the bounding discs of
all ovals. The little technical difficulty is that one requires to
understand the intersection indices so obtained. This game still
escapes me slightly but  is well understood by Arnold and Rohlin,
compare e.g. the proof of Rohlin's formula.
One of the additional difficulty is
that the pencil of $(m-2)$-tics will have non-assigned basepoints
and those create additional intersections somewhat harder to
control since their location is not known a priori (in contrast to
the assigned basepoints). We hope to discuss this issue in more
detail later.

Perhaps another general philosophical comment. As we emphasized
the method of total reality used in
(\ref{total-reality-M-curves-EVEN-punching-card:thm}) for
$M$-curves of even order is just an avatar of a Harnack-style
argument. Historically it may also be remembered that the
prototype for this sort of reasonings goes back to
Zeuthen 1874, who impressed much Klein and so indirectly Harnack.
Of course Zeuthen himself refers back to M\"obius and von Staudt
who expressed in modern vocabulary fixed what we call nowadays the
intersection theory of $\RR P^2$. So our strategy toward Ragsdale
bears some close analogy with Harnack's synthetic proof of the
so-called Harnack inequality $r\le g+1$. Hence if Ragsdale
estimate $\chi \le k^2$ (or its general version with absolute
value $\vert \chi \vert \le k^2$) is correct, it is likely that
its proof proceeds along a similar line than that of Harnack's
inequality which is so-to-speak the most fundamental estimate for
the topology of real curves. Maybe this vague analogy gives
another weak evidence that we are on the right track toward
proving Ragsdale [or related results \`a la Petrovskii-Arnold].

A last philosophical remark is in order. As we all know Klein 1876
offered a somewhat more conceptual (or intrinsic) justification of
$r\le g+1$ by using merely topology, as opposed to the synthetic
geometry of Harnack. As a rule Klein's argument is conceptually
somewhat more limpid than Harnack's which is a bit tricky
arithmetics/cominatorics. Accordingly one could also suspect a
topological proof of Ragsdale somewhat easier than via total
pencils. In substance this could be our crude but erroneous
approach via Thom's genus (lower) bound
(\cite[Sec.\,33]{Gabard_2013_Ahlfors}) or the programme sketched
by Fiedler using knot theory. Notwithstanding we may expect that
our synthetical strategy has still some good chance to crack
Ragsdale, and we hope being able to attack this question in the
future.

\subsection{A disappointing estimate with zero-information
on the unassigned basepoints}

[19.04.13, but TeXified 21.04.13] If we ape directly the proof of
Theorem~\ref{Hilbert-via-total-reality:thm}, i.e. Hilbert's Ansatz
via Riemann's Nachlass and Poincar\'e's index formula to the case
of a general even degree $m=2k$ $M$-curve, we get an estimate
which is extremely disappointing when $m> 6$. This will be exposed
right below, and the dream should be to get a sharper estimate by
trying to control better the location of the unassigned
basepoints.

Let $C_{m}$ be a plane  $M$-curve of even degree $m=2k$. By
Theorem~\ref{total-reality-M-curves-EVEN-punching-card:thm} we
have a phenomenon of total reality for a pencil of curves of
degree $(m-2)$ assigned to visit $B=M+(m-4)$ basepoints (cf. 1st
paragraph of its proof).

So applying Poincar\'e's index formula to the (doubled) Ragsdale
membrane, $2R$, we get the following bound after noting that there
are $(m-2)^2-B$ unassigned basepoints:
\begin{align*}
\chi(2R)=\sum {indices}&\le B+2[(m-2)^2-B]\cr
&= 2(m-2)^2-B \cr
&= 2(m-2)^2-M-(m-4) \cr
&= 2(m-2)^2-\frac{(m-1)(m-2)}{2}-1-(m-4) \cr
&= 2(2k-2)^2-(2k-1)(k-1)-1-(2k-4) \cr
&= 8(k-1)^2-(2k-1)(k-1)-1-2(k-2) \cr
&= 8(k-1)^2-(2k-1)(k-1)-2(k-1)+1 \cr
&= (k-1)[8(k-1)-(2k-1)-2]+1 \cr
&= (k-1)[6k-9]+1 \cr
&= [(k-1)3(2k-3)]+1.
\end{align*}
Therefore
$$
\chi=\chi(R)\le \frac{3}{2}(k-1)(2k-3)+\frac{1}{2}.
$$
While this is interesting for $m=6$ (as we saw), for $m=8$ (so
$k=4$) this bound is useless, yielding only $\chi\le \frac{3}{2}
3\cdot 5+\frac{1}{2}=\frac{45}{2}+\frac{1}{2}=23$, which is stupid
as by Harnack we know $\chi\le M=22$. Of course Petrovskii bound
is even much better yielding $\chi\le
\frac{3}{2}k(k-1)+1=\frac{3}{2}4\cdot 3+1= 19$. Further
asymptotically our bound is $\approx 3 k^2$ which is completely
useless in comparison to Harnack's bound $\chi\le M\approx 2 k^2$

So we see that our method needs to be refined and there is of
course much maneuvring room to do this, e.g. taking into account
singularity of negatives indices and/or trying to predict the
location of the unassigned basepoints. (It is perhaps here that
deep predestination process of algebraic geometry \`a la
Euler-Cayley-Bacharach or the Euler-Jacobi-Kronecker interpolation
formula) have to enter into the scene. At this stage, it is safe
to leave the topic to other more qualified workers (the dream
being to crack Ragsdale's conjecture, and more modestly to reprove
Petrovskii or the strengthened version  due to Arnold!)

As a last loose idea it is important that the phenomenon of total
reality always implies one point circulating on each circuit
(=ovals). By virtue of the holomorphic character of the
Riemann-Ahlfors map this gives raise to a dextrogyration, i.e.
points moves compatibly with the complex orientation induced on
the ovals (Rohlin's jargon). This and other dynamical principles
should perhaps aid to predict the location of the unassigned base
points.

\medskip

{\small {\bf Acknowledgements.} It is a great pleasure to thank
S\'everine Fiedler-Le Touz\'e and Thomas Fiedler for direct
inspiration upon this little work, as well as Oleg Viro,
Viatcheslav Kharlamov, Alexis Marin, Eugenii Shustin, Stepa
Orevkov, for exceptionally instructive e-mails reproduced in
Gabard 2013 \cite{Gabard_2013_Ahlfors}. }

{\small

}

{
\hspace{+5mm} 
{\footnotesize
\begin{minipage}[b]{0.6\linewidth} Alexandre
Gabard

Universit\'e de Gen\`eve

Section de Math\'ematiques

2-4 rue du Li\`evre, CP 64

CH-1211 Gen\`eve 4

Switzerland

alexandregabard@hotmail.com
\end{minipage}
\hspace{-25mm} }

\end{document}